\documentclass[11pt]{article}

\usepackage{amsmath}
\usepackage{amsthm}
\usepackage{amsfonts}
\usepackage{setspace}
\usepackage{fullpage}
\usepackage{amssymb}
\usepackage{enumitem}
\usepackage{bbold} 
\usepackage{comment}

\usepackage{authblk}
\usepackage{hyperref}

\bibliographystyle{plain}

\newtheorem{lemma}{Lemma}
\newtheorem{theorem}{Theorem} 
\newtheorem{proposition}{Proposition}
\newtheorem{corollary}{Corollary}

\newcommand{\A}{\mathcal{A}}

\newcommand{\C}{\mathcal{C}}

\newcommand{\T}{\mathcal{T}}
\newcommand{\s}{\mathcal{S}}
\newcommand{\abs}[1]{\left\lvert{#1}\right\rvert}
\newcommand{\floor}[1]{\left\lfloor{#1}\right\rfloor}

\newcommand{\ex}{{\rm  ex}}
\newcommand{\slfrac}[2]{\left.#1\middle/#2\right.}

\title{Localized versions of extremal problems}

\author[1]{David Malec}
\author[2]{Casey Tompkins}
\affil[1]{University of Maryland, College Park}
\affil[2]{Alfr\'ed R\'enyi Institute of Mathematics}

\begin{document}

\maketitle

\begin{abstract}
We generalize several classical theorems in extremal combinatorics by replacing a global constraint with an inequality which holds for all objects in a given class. In particular we obtain generalizations of Tur\'an's theorem, the Erd\H{o}s-Gallai theorem, the
LYM-inequality, the Erd\H{o}s-Ko-Rado theorem and the Erd\H{o}s-Szekeres theorem on sequences.
\end{abstract}

\section{Introduction}
In this paper we will consider a systematic way to generalize some classical problems in extremal combinatorics.  
In an extremal problem one typically seeks to maximize the number of some object among a class of finite structures (graphs, hypergraphs, sequences etc.) avoiding some other object.  
We seek to reformulate such problems in the following more general setting. 
Rather than considering structures of some class  forbidding a given object, we define a weight function which holds for arbitrary structures from that class.
We then prove a bound on this weight function that immediately implies the original extremal theorem. 

We begin by discussing the case of graphs. For a graph $G$, we denote the set of vertices and edges of $G$ by $V(G)$ and $E(G)$, respectively. 
A clique with $r$ vertices is denoted by $K_r$. 
For a given graph $F$, the Tur\'an number $\ex(n,F)$ is the maximum number of edges possible in an $n$-vertex graph which does not contain $F$ as a subgraph. 

Tur\'an's theorem asserts that for all $r$ we have $\ex(n,K_{r+1}) \le \frac{(r-1)n^2}{2r}$, and equality holds when $r$ divides $n$ and we take a complete $r$-partite graph with classes of equal size.
Now we attempt to generalize this result by considering an arbitrary $n$-vertex graph $G$ and looking at the behavior of the edges.
Since for any $r$ we wish to recover the bound of $\frac{(r-1)n^2}{2r}$ in the case when no edge is in a copy of $K_{r+1}$, we define a weight on the edges based on the size of the cliques in which that edge occurs.
To this end, let 
\[
c(e) = \max\{r:\mbox{$e$ occurs in a subgraph of $G$ isomorphic to $K_r$}\}.
\]
It is then natural to assign each edge a weight $\slfrac{1}{\frac{(c(e)-1)n^2}{2c(e)}}$ with denominator matching the corresponding extremal bound. 
We wish to prove that the sum of these weights is at most $1$.
Moving the $n^2/2$ factor to the other side of the inequality we arrive at the following natural assertion.
\begin{theorem} \label{localizedTuran}
Let $G$ be any $n$-vertex graph, then
\[
\sum_{e \in E(G)} \frac{c(e)}{c(e)-1} \le \frac{n^2}{2},
\]
and equality holds if and only if $G$ is a multipartite graph with classes of equal size.
\end{theorem}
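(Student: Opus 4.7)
The plan is to prove a weighted Motzkin--Straus-type inequality from which the theorem follows by specialization. Define the quadratic form
\[
f(x) = \sum_{uv \in E(G)} \frac{c(uv)}{c(uv)-1}\, x_u x_v
\]
on the standard simplex $\Delta = \{x \in \mathbb{R}_{\ge 0}^{V(G)} : \sum_v x_v = 1\}$. The main claim is that $f(x) \le 1/2$ for every $x \in \Delta$; evaluating at the uniform point $x_v = 1/n$ (so $\sum_v x_v = 1$) and multiplying through by $n^2$ yields the desired inequality.

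To establish $f \le 1/2$, observe that if $uv \notin E(G)$ then the monomial $x_u x_v$ does not appear in $f$, so $f$ is linear in $x_u$ when $x_u + x_v$ and the remaining coordinates are held fixed. One can therefore transfer all the mass between $u$ and $v$ onto whichever coordinate has the larger partial derivative without decreasing $f$. Iterating, the supremum of $f$ on $\Delta$ is attained at some point $x^*$ whose support is a clique $Q \subseteq V(G)$, of size $k \le \omega(G)$. On such a clique every edge satisfies $c(uv) \ge k$, hence $\frac{c(uv)}{c(uv)-1} \le \frac{k}{k-1}$, and
\[
f(x^*) \le \frac{k}{k-1}\sum_{\{u,v\} \in \binom{Q}{2}} x_u^* x_v^* = \frac{k}{2(k-1)}\Big(1 - \sum_{v \in Q} (x_v^*)^2\Big) \le \frac{k}{2(k-1)} \cdot \frac{k-1}{k} = \frac{1}{2},
\]
where the final step uses the Cauchy--Schwarz bound $\sum_{v \in Q} (x_v^*)^2 \ge 1/k$.

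For the equality characterization, the assumption $\sum_e c(e)/(c(e)-1) = n^2/2$ is equivalent to saying that $x = (1/n,\ldots,1/n)$ is a global maximizer of $f$, so every step in the argument above must be tight. Tightness of the mass-transfer step forces $\sum_{w \sim u} \frac{c(uw)}{c(uw)-1} = \sum_{w \sim v} \frac{c(vw)}{c(vw)-1}$ for every non-edge $uv$, and tightness of the clique bound forces $c(uv) = k = \omega(G)$ for every pair in some maximum clique $Q$ along with uniform weights $1/k$ on $Q$ after the shifts. Tracking how the uniform distribution on $V(G)$ is redistributed onto $Q$ partitions $V(G) \setminus Q$ into cells of equal size $(n-k)/k$ attached to the vertices of $Q$ (forcing $k \mid n$), and the balanced-gradient condition forces each cell together with its target vertex in $Q$ to form an independent set joined completely to the other cells --- in other words, $G$ is the complete $k$-partite graph on $n$ vertices with $n/k$ vertices in each class. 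I expect this equality analysis, rather than the inequality itself, to be the main obstacle: the bound is a clean two-step Lagrangian calculation, but carefully extracting the balanced multipartite structure from the tightness conditions requires more care.
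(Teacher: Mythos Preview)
Your proof of the inequality is correct and follows the Motzkin--Straus route, which is genuinely different from the paper's argument. The paper proceeds by induction on $n$: it removes a maximum clique $C$ of size $k$, bounds the contribution of edges inside $C$ by $k^2/2$, the contribution of crossing edges by $k(n-k)$ (using that each $v\notin C$ has at most $k-1$ neighbours in $C$, and each such edge lies in a clique of size at least $|N(v)\cap C|+1$), and applies induction to $G\setminus C$ for the remaining $(n-k)^2/2$. The paper in fact remarks that Brada\u{c} independently proved the theorem via Motzkin--Straus, so your approach coincides with that alternative line.

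Where your proposal is thin is the equality case, as you yourself flag. The phrase ``tracking how the uniform distribution is redistributed onto $Q$'' does not pin down a well-defined partition: the shifting process is not canonical, and for an arbitrary sequence of shifts the resulting ``cells'' need not be pairwise completely joined in $G$, nor is it clear a priori that the terminal clique is a maximum one. A clean way to close this is to shift within the connected components $V_1,\dots,V_m$ of the complement $\bar G$ (which \emph{is} a canonical partition): the terminal support $\{v_1,\dots,v_m\}$ with $v_i\in V_i$ is then automatically a clique in $G$, equality in your Cauchy--Schwarz step forces $|V_i|/n=1/m$ for every $i$, equality in the weight bound forces $c(v_iv_j)=m$ for every cross pair (and this holds for \emph{any} choice of representatives $v_i\in V_i$), and then the first-order condition $\sum_{w\sim u}\frac{c(uw)}{c(uw)-1}=n$ that you state forces each $G[V_i]$ to be edgeless, since the cross-edges from $u\in V_i$ alone already contribute $\frac{n(m-1)}{m}\cdot\frac{m}{m-1}=n$. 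By contrast, the paper's inductive proof extracts the equality characterisation essentially for free inside the same induction, by reading off what tightness of each of the three edge-class estimates means and reassembling the parts; this is arguably the main practical advantage of the inductive approach over the Lagrangian one here.
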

We prove Theorem~\ref{localizedTuran} in Section~\ref{graphs} using an inductive argument.
Theorem~\ref{localizedTuran} was very recently proved independently by Domagoj Brada\u{c}~\cite{bradac} using the method of Motzkin and Straus~\cite{motzkin1965maxima}.
Brada\u{c}~\cite{bradac} communicates that the statement of Theorem~\ref{localizedTuran} was proposed by Balogh and Lidick\'y and discussed at a conference in Oberwolfach in 2022.
Moreover he mentions that stability results in the case of $K_5$-free graphs have been obtained by Balogh and Lidick\'y.
We will refer to Theorem~\ref{localizedTuran} as a localized version of Tur\'an's theorem since the weight assigned to each edge depends only on the structures which it participates in.
The bound from Tur\'an's theorem for any $r$ is easily recovered by noting that in a $K_{r+1}$-free graph each edge has weight at most $\frac{r}{r-1}$.

Next we turn our attention to the case of paths. 
Erd\H{o}s and Gallai~\cite{gallai1959maximal} proved that an $n$-vertex graph without a copy of a path of length $k$ contains at most $\frac{(k-1)n}{2}$ edges. Let $P_k$ denote the path of length $k$ (that is, with $k$ edges).  
Again we introduce a function on the edges 
\[
p(e) = \max\{k:\mbox{$e$ occurs in a subgraph of $G$ isomorphic to $P_k$}\}.
\]
Then we have the following localized version of the theorem of Erd\H{o}s and Gallai.
\begin{theorem} \label{localizedErdosGallai}
Let $G$ be any $n$-vertex graph, then
\[
\sum_{e\in E(G)} \frac{1}{p(e)} \le \frac{n}{2},
\]
and equality holds if an only if every connected component of $G$ is a clique.
\end{theorem}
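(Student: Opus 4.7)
The plan is to induct on $n$. Since both sides of the inequality are additive over connected components, it suffices to prove for connected $G$ on $n$ vertices that $\sum_e 1/p(e) \le n/2$, with equality exactly when $G = K_n$ (the equality clause for general $G$ then follows, with a small caveat to rule out isolated vertices which strictly reduce the sum). I would fix a longest path $P = v_0 v_1 \ldots v_k$ in $G$, noting that $p(e) \le k$ for every edge and that all neighbors of $v_0$ lie on $P$ (or else $P$ could be extended), and split the argument on whether $G$ contains a Hamilton cycle.

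In the Hamilton-cycle case, my claim is that every edge of $G$ lies on some Hamilton path, so $p(e) = n-1$. For an edge of the cycle this is immediate (delete an adjacent cycle edge); for a chord $v_i v_j$ one can splice it into the cycle to obtain an explicit Hamilton path containing that chord. Then $\sum_e 1/p(e) = |E(G)|/(n-1) \le \binom{n}{2}/(n-1) = n/2$, with equality iff $G = K_n$.

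If $G$ has no Hamilton cycle, the key step is the classical rotation lemma: $\deg(v_0) + \deg(v_k) \le k$. Setting $A = \{i : v_0 v_i \in E\}$ and $B = \{i : v_k v_{i-1} \in E\}$, both subsets of $\{1,\ldots,k\}$, a violation of this bound forces $A \cap B \ne \emptyset$ and produces a cycle of length $k+1$ on $V(P)$. If $k = n-1$ this is already a Hamilton cycle, contradicting the case; if $k < n-1$, connectivity yields some $u \notin V(P)$ adjacent to a vertex of $V(P)$, and breaking the new cycle there and prepending $u$ produces a path longer than $P$, contradicting its maximality. So I may assume $\deg(v_0) \le k/2$.

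I would then delete $v_0$ and apply the inductive hypothesis to $G - v_0$ (component-wise if necessary). Since any path in $G - v_0$ is still a path in $G$, we have $p_G(e) \ge p_{G-v_0}(e)$ for each remaining edge, giving $\sum_{e \in E(G - v_0)} 1/p_G(e) \le (n-1)/2$. For each neighbor $v_i$ of $v_0$, the rearrangement $v_{i-1}\ldots v_1 v_0 v_i v_{i+1}\ldots v_k$ is a length-$k$ path through $v_0 v_i$, so $p_G(v_0 v_i) = k$ and the $v_0$-incident contribution is exactly $\deg(v_0)/k \le 1/2$; adding the two bounds gives $n/2$. The main obstacle is the Hamilton-cycle case, since the degree-sum lemma fails outright for $K_n$ and one genuinely needs the separate observation that Hamiltonicity forces $p(e) = n-1$ on every edge. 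The equality characterization then follows by tracing which inequalities must be tight, showing that in the induction step tightness forces $G-v_0$ to be a disjoint union of cliques plus $p_G = p_{G-v_0}$ on every remaining edge, which is only consistent with $G$ being a clique and thus handled by the Hamilton-cycle branch.
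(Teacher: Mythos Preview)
Your proposal is correct and follows essentially the same approach as the paper: induction on $n$, reduction to connected $G$, a case split on whether $V(P)$ carries a spanning cycle (the paper phrases this as ``a circuit containing all vertices of $P$'' rather than ``$G$ has a Hamilton cycle,'' but these are equivalent once one observes such a circuit forces $V(P)=V(G)$), the rotation/degree-sum bound $\deg(v_0)+\deg(v_k)\le k$ in the non-Hamiltonian case, and deletion of a low-degree endpoint followed by induction. The equality analysis is likewise the same idea; the paper makes your final clause explicit by exhibiting the second edge $f=v_1v_2$ of $P$ and noting that prepending $v_0$ to a Hamilton path of the clique component of $G\setminus v_0$ containing $f$ forces $p_G(f)>p_{G\setminus v_0}(f)$.
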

Again, the original theorem of Erd\H{o}s and Gallai is easily recovered by taking a $P_k$-free graph~$G$ and using that $p(e) \le k-1$.
Also one can rather simply give a similar result for stars. 
For any graph $G$, let $s(e)$ be the maximum number of edges in a star containing $e$.
\begin{proposition} \label{stars}
Let $G$ be any $n$-vertex graph, then
\[
\sum_{e\in E(G)} \frac{1}{s(e)} \le \frac{n}{2},
\]
and equality holds if and only if each connected component of $G$ is regular.
\end{proposition}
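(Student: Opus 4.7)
The plan is a short double counting argument. The key observation is that for any edge $e = uv$, every star containing $e$ must be centered at $u$ or at $v$, and so
\[
s(e) = \max(\deg(u), \deg(v)).
\]
In particular, for each edge $uv$ and each endpoint $v$ one has $s(uv) \ge \deg(v)$.

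Given this, I distribute the weight of each edge evenly between its two endpoints and bound the contribution at each vertex separately:
\[
\sum_{e \in E(G)} \frac{1}{s(e)} \;=\; \sum_{v \in V(G)}\, \sum_{u \sim v} \frac{1}{2\,s(uv)} \;\le\; \sum_{\substack{v \in V(G) \\ \deg(v) \ge 1}} \sum_{u \sim v} \frac{1}{2\deg(v)} \;=\; \sum_{\substack{v \in V(G) \\ \deg(v) \ge 1}} \frac{1}{2} \;\le\; \frac{n}{2}.
\]
The middle inequality uses $s(uv) \ge \deg(v)$, and isolated vertices may be dropped from the outer sum since their inner sum is empty.

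For the equality statement, I would track when each inequality is tight. The bound $s(uv) \ge \deg(v)$ holds with equality for every edge $uv$ at each endpoint $v$ precisely when $\deg(u) = \deg(v)$ for all $uv \in E(G)$, which is exactly the condition that every connected component of $G$ is regular; the final $\sum \tfrac12 \le \tfrac{n}{2}$ is tight iff $G$ has no isolated vertices, so regularity is to be read with positive common degree on each component. Conversely, for any disjoint union of connected regular graphs of positive degree one checks directly that $s(e)$ equals the common degree of its component, giving total weight exactly $n/2$. I do not expect a real obstacle here: the only subtlety is choosing the right averaging, as charging the entire weight $1/s(e)$ to the higher-degree endpoint alone only yields the weak bound of $n$, whereas the even split combined with $s(uv) \ge \deg(v)$ produces the sharp $n/2$ and simultaneously reveals the extremal graphs.
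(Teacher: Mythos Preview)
Your proof is correct and is essentially the paper's own argument: the paper sets $w(v)=\sum_{e\ni v}1/s(e)$, bounds $w(v)\le 1$ via $s(e)\ge d(v)$, and sums over vertices to get $2\sum_e 1/s(e)\le n$, which is exactly your even split of each edge's weight between its endpoints. Your explicit identification $s(uv)=\max(\deg u,\deg v)$ and the observation that isolated vertices must be excluded for equality are nice clarifications that the paper does not spell out.
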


Abstractly we are looking for theorems of the following form.
Suppose we have an infinite sequence of graphs $F_1,F_2,\dots$ with the property that for all $i$, $F_i$ is a subgraph of $F_{i+1}$.
Let $G$ be an arbitrary $n$-vertex graph, and for each edge $e \in E(G)$ set 
\[f(e) = \max\{i:\mbox{$e$ is an edge of a subgraph of $G$ isomorphic to $F_i$}\}.\]
Then we would like to know when it holds that 
\begin{equation} \label{weighted}
\sum_{e\in E(G)} \frac{1}{\ex(n,F_{f(e)+1})} \le 1.
\end{equation}
A bound of this form simultaneously generalizes each of the corresponding results for extremal numbers~$\ex(n,F_i)$. 
Indeed, if $G$ does does not contain a particular $F=F_i$ as a subgraph, then by the monotonicity of the extremal function $\ex(n,F)$ with respect to taking subgraphs, we have
\[
\frac{\abs{E(G)}}{\ex(n,F)} \le \sum_{e\in E(G)} \frac{1}{\ex(n,F_{f(e)+1})} \le 1,
\]
and consequently $\abs{E(G)} \le \ex(n,F)$. 
A stronger corollary of a bound of the form~\eqref{weighted} is the following. Suppose for an infinite sequence of graphs $F_1\subset F_2\subset\cdots$ we have a bound of the form~\eqref{weighted}. 
Fix $F=F_i$ for some $i$, and partition the edge set $E(G)$ into two parts $S$ and $T$ where $S = \{e\in E(G): \mbox{$e$ is an edge of a subgraph of $G$ isomorphic to $F$}\}$ and $T = E(G) \setminus S$, then 
\begin{equation} \label{borgtype}
\frac{\abs{S}}{\ex(n,F)} + \frac{\abs{T}}{\binom{n}{2}} \le 1.
\end{equation}
The bound~\eqref{borgtype} follows by replacing $\ex(n,F_j)$ with $\ex(n,F_i)$ for all $j<i$ and replacing $\ex(n,F_j)$ with $\binom{n}{2}$ for all $j>i$. 
Note that Theorems~\ref{localizedTuran} and~\ref{localizedErdosGallai} are not precisely of the form~\eqref{weighted} since the corresponding extremal bounds which we generalize are only tight in certain divisibility cases. 
It would be interesting to extend these results so that the inequality is sharp in all divisibility cases.  Note that inequalities of the form~\eqref{weighted} do not hold for any possible graph sequence.  Indeed, if $F_1$ is a triangle, $F_2$ is a bow tie and $G$ is a balanced complete bipartite graph plus an edge, then~\eqref{weighted} is violated.

In extremal set theory bounds of the  form~\eqref{borgtype} sometimes have had applications for `cross'-versions~\cite{hilton1977intersection} (also termed `multicolor'-versions~\cite{bollobas2004multicoloured,keevash2004multicolour}) of various problems.
We now turn our attention to finding localized versions of some theorems in extremal set theory.

First we recall the classical LYM-inequality~\cite{bollobas1965generalized,lubell1966short,meshalkin1963generalization,yamamoto1954logarithmic} which generalizes Sperner's theorem~\cite{sperner1928satz} about antichains in the Boolean lattice. 
Let $\A \subset 2^{[n]}$ be a family of sets forming an antichain with respect to the subset relation, then we have
\begin{equation} \label{LYM}
\sum_{A\in\A} \frac{1}{\binom{n}{\abs{A}}}\le 1.  
\end{equation}
Daykin and Frankl proved the following natural extension of the LYM-inequality.  Suppose that $\A$ is any family of subsets of $[n]$ and let $\s \subset \A$ be the collection of those sets in $\A$ incomparable to every set in $\A$. 
Write $\T = \A \setminus \s$, then
\begin{equation}\label{daykin}
\sum_{A\in\s} \frac{1}{\binom{n}{\abs{A}}} + \frac{\abs{\T}}{2^n} \le 1.
\end{equation}
The inequality~\eqref{daykin} can be used to directly bound the sum of the sizes of a family of cross-Sperner families. 

A $(k+1)$-chain in $2^{[n]}$ is a collection of sets $A_1,A_2,\dots,A_{k+1}$ such that for all $i$ we have $A_i \subset A_{i+1}$. Another natural extension of the LYM-inequality is the following. 
Suppose $\A \subset 2^{[n]}$ contains no $(k+1)$-chain, then
\begin{equation}\label{katona}
\sum_{A\in\A} \frac{1}{\binom{n}{\abs{A}}}\le k.
\end{equation}
The inequality~\eqref{katona} can be proved by applying a Mirsky-type~\cite{mirsky1971dual} decomposition to the set family or directly by imitating Lubell's proof of the LYM-inequality~\cite{katona}.  
Our main result in this topic is a generalization of~\eqref{katona}.
For an arbitrary $\A \subset 2^{[n]}$, let 
\[
c(A) = \max \{k:\mbox{$A$ participates in a $k$-chain consisting of sets from $\A$}\}.
\]
\begin{theorem}\label{localizedLYM} Let $\A\subset 2^{[n]}$ be an arbitrary family of sets, then
\[
\sum_{A\in \A} \frac{1}{\binom{n}{\abs{A}}c(A)}\le 1.
\]
% Moreover equality holds for families which consist of the union of complete levels.
Moreover equality holds if and only if $\A$ consists of a union of complete levels.
\end{theorem}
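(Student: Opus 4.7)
The plan is to adapt Lubell's averaging argument for the LYM inequality. I would range over all $n!$ maximal chains $\mathcal{C}: \emptyset = C_0 \subsetneq C_1 \subsetneq \dots \subsetneq C_n = [n]$ of the Boolean lattice $2^{[n]}$, using the standard fact that a set $A$ with $|A| = k$ lies on exactly $k!\,(n-k)!$ of them. Averaging will reduce the main inequality to proving a chain-wise bound: for every maximal chain $\mathcal{C}$,
\[
\sum_{A \in \A \cap \mathcal{C}} \frac{1}{c(A)} \le 1.
\]

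This chain-wise bound is nearly immediate. The sets in $\A \cap \mathcal{C}$ form a chain $B_1 \subsetneq B_2 \subsetneq \dots \subsetneq B_m$ contained in $\A$, so by the definition of $c$ each $c(B_i) \ge m$, and the sum is at most $m \cdot (1/m) = 1$. Summing this bound over all maximal chains and dividing by $n!$ converts it into the stated inequality via $\Pr[A \in \mathcal{C}] = 1/\binom{n}{|A|}$.

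For the equality case, equality in the main bound forces the chain-wise bound to be tight for every $\mathcal{C}$; in particular $c(B_i) = m_{\mathcal{C}} := |\A \cap \mathcal{C}|$ for every $B_i \in \A \cap \mathcal{C}$, and $m_{\mathcal{C}} \ge 1$ for every $\mathcal{C}$. I would then compare two adjacent maximal chains $\mathcal{C}$ and $\mathcal{C}'$ differing in exactly one set (say $B \in \mathcal{C}$ replaced by $B' \in \mathcal{C}'$, with $|B| = |B'|$): a short case analysis using that $c$ is a function of the set, and the condition $m_{\mathcal{C}'} \ge 1$, shows that $B \in \A$ iff $B' \in \A$. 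Since the single-element swap graph on $\binom{[n]}{i}$ (the Johnson graph $J(n,i)$) is connected, this forces $\A$ to contain every set of size $i$ whenever it contains one; hence $\A$ is a union of complete levels. The converse is a direct verification, since for such an $\A$ every set has the same $c$-value $m$ and the sum collapses to $m \cdot (1/m) = 1$.

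The main obstacle is the equality characterization: the adjacency step splits into sub-cases depending on whether the shared portion $\mathcal{C} \cap \mathcal{C}'$ meets $\A$, and one needs to leverage $m_{\mathcal{C}'} \ge 1$ to exclude the degenerate situation in which $\mathcal{C}'$ would otherwise fail to meet $\A$ after the swap.
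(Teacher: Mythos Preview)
Your proposal is correct and takes essentially the same approach as the paper: the main inequality is proved by Lubell's averaging over maximal chains with the per-chain bound $\sum_{A\in\A\cap\mathcal{C}}1/c(A)\le 1$, and the equality case is handled by comparing two maximal chains that differ at a single level. The only cosmetic difference is that the paper picks a specific witness pair $A\in\A$, $B\notin\A$ with $\abs{A\triangle B}=2$ and exhibits a chain of weight $1-1/m<1$, whereas you prove the equivalent adjacency statement ``$B\in\A\iff B'\in\A$'' and invoke connectivity of the Johnson graph---the same idea in slightly different packaging.
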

A further extension of Theorem~\ref{localizedLYM} to the setting of posets is given in Section~\ref{posets}, Corollary~\ref{posetcA}. Observe that if $\A$ is a $(k+1)$-chain free family, then $c(A) \le k$ and we recover~\eqref{katona}.
The inequality in Theorem~\ref{localizedLYM} and~\eqref{daykin} seem to be independent of each other.

Next we consider intersecting set families. Recall that the Erd\H{o}s-Ko-Rado~\cite{erdos1961intersection} theorem asserts than if $r<n/2$, then a pairwise intersecting family consisting of sets of size $r$ has size at most~$\binom{n-1}{r-1}$. 
In order to find a shorter proof for a theorem of Hilton~\cite{hilton1977intersection}, Borg~\cite{borg2009short} proved the following generalization of the Erd\H{o}s-Ko-Rado theorem.
Let $r<n/2$ and let $\A$ be an arbitrary family of $r$-element subsets of $[n]$.  Let $\s = \{A\in\A:\mbox{for all $B\in\A$, $A$ and $B$ intersect}\}$ and $\T=\A\setminus \s$, then
\begin{equation}
\abs{\s} + \frac{r}{n}\abs{\T} \le \binom{n-1}{r-1}.
\end{equation}
%\[
%\frac{\abs{\s}}{\binom{n-1}{r-1}} + \frac{\abs{\T}}{\binom{n}{r}} \le 1.
%\]
We now give a localized version generalizing Borg's result. 
\begin{theorem}
\label{m(A)}
Let $\A$ be a collection of $r$-element subsets of $[n]$. Let $m$ be the function on $\A$ defined by setting $m(A)=n/r$, if $A$ is contained in some matching of $\floor{n/r}$ sets from $\A$, and setting $m(A)$ equal to the size of the largest matching in $\A$ containing $A$, if that matching has fewer than $\floor{n/r}$ sets, then
\begin{displaymath}
\sum_{A\in\A} \frac{1}{m(A)} \le \binom{n-1}{r-1}.
\end{displaymath}
\end{theorem}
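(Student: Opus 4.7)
The plan is to adapt Katona's cyclic permutation method, in the spirit of Borg's proof of his precursor inequality and of the other localized results in this paper.

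For each cyclic permutation $\pi$ of $[n]$, let $\A_\pi\subseteq\A$ be the subfamily of those sets which form an $r$-arc of $\pi$. A fixed $r$-subset appears as an arc in exactly $r!(n-r)!$ of the $(n-1)!$ cyclic permutations, so double counting gives
\[
\sum_\pi\sum_{A\in\A_\pi}\frac{1}{m(A)}\;=\;r!(n-r)!\sum_{A\in\A}\frac{1}{m(A)}.
\]
Since $r(n-1)!/[r!(n-r)!]=\binom{n-1}{r-1}$, the theorem reduces to the local estimate $\sum_{A\in\A_\pi}1/m(A)\le r$ for every $\pi$.

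To turn this into a statement purely about arcs on a cycle, I would define $m_\pi(A)$ by applying the rule defining $m$ to the family $\A_\pi$ rather than $\A$. Any matching in $\A_\pi$ is a matching in $\A$, and the threshold $\lfloor n/r\rfloor$ is the same in both definitions, so $m_\pi(A)\le m(A)$ and hence $1/m(A)\le 1/m_\pi(A)$. It therefore suffices to prove that for every family $\B$ of $r$-arcs on a cycle of length $n$,
\[
\sum_{A\in\B}\frac{1}{m_\B(A)}\;\le\;r.
\]
This specializes to the classical Katona bound when $\nu(\B)=1$, and is tight for both $\B=$ all $r$-arcs of the cycle and $\B=$ all $r$-arcs through a fixed vertex, which should guide the equality analysis.

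Proving this cyclic arc inequality is the main obstacle. The key structural observation is that any $r$-arc meets at most two arcs of any fixed maximum matching in $\B$: the starts of disjoint $r$-arcs lie at cyclic distance at least $r$, while the starts of arcs meeting a given arc span a window of only $2r-1$ positions, which accommodates at most two matching starts. It follows that every $A\in\B$ belongs to some matching of size at least $\nu(\B)-1$, and consequently the weights $1/m_\B(A)$ take one of only two values, according to whether $A$ lies in some maximum matching of $\B$ or not; arcs in the latter class furthermore meet exactly two arcs of every maximum matching. The remaining task is to extract a sharp bound by counting these two classes separately in the two regimes $\nu(\B)=\lfloor n/r\rfloor$ and $\nu(\B)<\lfloor n/r\rfloor$. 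I expect the hardest configurations to be those where $\B$ wraps around the cycle in a genuinely circular way; these may require an auxiliary induction on $n$, obtained by cutting the cycle at a vertex missed by every arc of $\B$ whenever such a vertex exists, thereby reducing to the linear arrangement.
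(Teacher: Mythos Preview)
Your reduction via Katona's cyclic-permutation double count is exactly the route the paper takes; the paper likewise isolates the cycle lemma $\sum_{A\in\A^\sigma}1/m(A)\le r$ as the heart of the matter (it works directly with $m$ rather than your $m_\pi$, but your further reduction is valid and harmless).

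Where you diverge is in the proof of the cycle lemma, and here your plan is incomplete. Your observation that every arc lies in a matching of size at least $\nu(\B)-1$ is correct, but the paper does not use it. Instead, writing $\abs{\B}=lr+k$ with $0\le k<r$, the paper proves the stronger fact that \emph{every} arc lies in an $l$-matching, by passing to the ``contracted'' cyclic order on the start positions of arcs in $\B$ and stepping $r$ places at a time. In the regime $l=\lfloor n/r\rfloor$ this already forces $m(A)=n/r$ for all $A$ and the bound follows from $\abs{\B}\le n$. In the regime $l<\lfloor n/r\rfloor$ the paper pads $\B$ with $r-k$ ``fake'' starting positions, partitions the resulting $(l+1)r$ positions into $r$ classes of size $l+1$ (residues modulo $r$ in the contracted order), and notes that at least $k$ classes are fake-free and hence form genuine $(l+1)$-matchings; those classes contribute at most $k$ to the sum, and the remaining $l(r-k)$ sets contribute at most $r-k$ via the universal $l$-matching bound.

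Your two-value count based on $\nu$ and $\nu-1$ and the suggested cycle-cutting induction may be workable, but as written they do not close the argument: you have not bounded $\abs{\B}$ or the number of ``bad'' arcs in a way that yields the inequality, and the configurations where the arcs wrap the whole cycle (so no cut point exists) are exactly where the paper's contracted-order trick does the real work.
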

Note that there are at least three distinct constructions attaining equality in Theorem~\ref{m(A)}.
In addition to a star or a full level we may take the construction consisting of all sets of size $r$ containing at least one of two fixed elements.

Finally, we turn out attention to perfect graphs, posets and sequences. Recall that a perfect graph is one in which the chromatic number and clique number of all induced subgraphs are equal.
In particular for a perfect graph $G$ we have $\omega(G)= \chi(G)$ and so $\alpha(G)\omega(G)=\alpha(G)\chi(G)\ge n$.
In Section~\ref{posets} we prove the analogous localized version of this bound as a corollary of a more technical statement about functions on perfect graphs that may be of independent interest.

Let $G$ be a perfect graph, and for each $v\in V(G)$ let $c(v)$ be the largest size of a clique containing~$v$, and let $i(v)$ be the largest size of an independent set containing $v$.

\begin{theorem}\label{localizedperfect}
For any $n$-vertex perfect graph $G$, we have
\[
\sum_{v\in V(G)} \frac{1}{c(v)i(v)} \le 1.  
\]
\end{theorem}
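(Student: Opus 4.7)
The plan is to deduce Theorem~\ref{localizedperfect} from Lov\'asz's theorem that, in a perfect graph, the stable set polytope coincides with the polytope $\{y\ge 0 : \sum_{v\in K}y(v)\le 1\text{ for every clique }K\}$. The underlying ``more technical statement'' alluded to in the introduction is then the following: for any perfect graph $G$ and any nonnegative functions $y, w\colon V(G)\to\mathbb{R}_{\ge 0}$ satisfying $\sum_{v\in K}y(v)\le 1$ for every clique $K$ and $\sum_{v\in I}w(v)\le 1$ for every independent set $I$, one has $\sum_v y(v)w(v)\le 1$. Indeed, by Lov\'asz's theorem such a $y$ lies in $\mathrm{conv}\{\chi_I : I \text{ independent}\}$, so the linear functional $y\mapsto\sum_v w(v)y(v)$ is bounded above by $\max_I\sum_{v\in I}w(v)$, and the hypothesis on $w$ forces this maximum to be at most $1$.

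To deduce Theorem~\ref{localizedperfect}, I would apply the above with $y(v)=1/c(v)$ and $w(v)=1/i(v)$. Both hypotheses reduce to one-line checks exploiting the defining maximality of $c$ and $i$: for any clique $K$ and any $v\in K$, the clique $K$ itself witnesses $c(v)\ge|K|$, giving $\sum_{v\in K}1/c(v)\le 1$; symmetrically, for any independent set $I$ and any $v\in I$ we have $i(v)\ge|I|$, and so $\sum_{v\in I}1/i(v)\le 1$. The technical statement then yields $\sum_v 1/(c(v)i(v))\le 1$, which is exactly the theorem.

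The main obstacle is conceptual rather than computational: one must recognise that $1/c(v)$ and $1/i(v)$ play dual roles in the perfect graph polytope framework, with the first satisfying the clique inequalities and the second satisfying the independent set inequalities. Once this is spotted, perfectness enters the argument at exactly one point, through the equality $\mathrm{STAB}(G)=\mathrm{QSTAB}(G)$, and every remaining step is a short verification.
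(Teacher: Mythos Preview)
Your proof is correct, but it takes a genuinely different route from the paper's. The paper deduces Theorem~\ref{localizedperfect} from its Theorem~\ref{superlemma}, which is proved by induction on the clique number: one takes a proper $k$-colouring of $H$ (available because $\chi(H)=\omega(H)=k$), removes in turn each colour class restricted to the set $B$ of vertices lying in a maximum clique, applies the induction hypothesis to the resulting graphs with clique number $k-1$, and sums. This uses only the defining property $\chi=\omega$ of perfect graphs on induced subgraphs. Your argument instead invokes Lov\'asz's polyhedral characterisation $\mathrm{STAB}(G)=\mathrm{QSTAB}(G)$, a considerably deeper structural fact, after which everything collapses to a one-line convex-combination bound.

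What each approach buys: the paper's induction is elementary and self-contained, and its intermediate Theorem~\ref{superlemma} is phrased so as to yield the poset corollaries directly. Your approach is shorter and more conceptual, and your ``technical statement'' (for perfect $G$, if $\sum_{v\in K}y(v)\le 1$ for every clique and $\sum_{v\in I}w(v)\le 1$ for every stable set, then $\sum_v y(v)w(v)\le 1$) is an attractive bilinear inequality in its own right; in fact it also implies the paper's Theorem~\ref{superlemma}, by applying it to the perfect graph $H$ with $y(v)=1/c(H,v)$ and $w(v)=1/f(H,v)$ and using the monotonicity hypothesis on $f$ to pass from $f(G[I],v)$ to $f(H,v)$. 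One small correction: the ``more technical statement'' the paper alludes to is not your bilinear lemma but the inductive Theorem~\ref{superlemma}, so your guess about what was meant was off, though this does not affect the validity of your argument.
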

As a consequence of Theorem~\ref{localizedperfect} we deduce a localized version of the classical theorem of Erd\H{o}s and Szekeres~\cite{erdos1935combinatorial} about sequences.

\section{Proofs of localized versions of extremal graph theorems} \label{graphs}

% %not sure if we need to spell out this notation.
%  For graphs $G$ and $H$ we use $G\setminus H$ to denote $G[V(G)\setminus V(H)]$.  For parameters such as $c(e)$ defined on a graph $G$ and a subgraph $H$ of $G$, we use a subscript $c_H(e)$ to denote the parameter defined in terms of the graph

  \begin{proof}[Proof of Theorem~\ref{localizedTuran}]
  We prove the claim by induction on $n$. The result holds
  trivially for $n=1$, so assume $n>1$ and let $G$ be an $n$-vertex graph with edge set $E$. If the graph contains no edges, then the bound follows trivially so assume $G$ has at least one edge.  
  Let $k$ be the size of the
  largest clique in $G$, and let $C$ be a clique of size $k$. 
  We
  split the edges $E$ into three parts: $E_{C}$, the edges within $C$;
  $E_{G\setminus C}$, the edges within $G\setminus C$; and $E_{S}$,
  the edges that connect $C$ and $G\setminus C$.  
  We bound the
  contribution to the sum from each of these separately.
  \begin{enumerate}
  \item Since $C$ is a clique of maximum size in $G$, we know that
    $E_{C}$ contains $k(k-1)/2$ edges, all with $c(e)=k$.  So we can
    see that
    \begin{equation*}
      \sum_{e \in E_{C}}\frac{c(e)}{c(e)-1}
      =\frac{k(k-1)}{2}\frac{k}{k-1}
      =\frac{k^2}{2}.
    \end{equation*}
  \item For all $v \in V(G \setminus C$), let $C_v=\{w \in C \mid
    \{v,w\}\in E\}$.  
    Note that $C_v\cup\{v\}$ is itself a clique, and
    so we may conclude that $\abs{C_v}+1\le k$, and $c(e)\ge
    \abs{C_v}+1$ for all edges $e$ that connect $v$ to $C$.  
    Thus, we
    obtain that
    \begin{equation*}
      \sum_{e \in E_{S}}\frac{c(e)}{c(e)-1}
      \le \sum_{v\in V(G \setminus C)}\abs{C_v}\frac{\abs{C_v}+1}{\abs{C_v}}
      \le \sum_{v\in V(G \setminus C)} k
      =(n-k)k.
    \end{equation*}
  \item Lastly, our induction hypothesis implies that 
  \begin{equation*}
    \sum_{e \in E_{G \setminus C}}\frac{c(e)}{c(e)-1}
    \le \frac{(n-k)^2}{2}.
    \end{equation*}
  \end{enumerate}
  Combining all three of the estimates above we obtain that
  \begin{equation}\label{combine}
    \sum_{e \in E} \frac{c(e)}{c(e)-1}
    \le \frac{k^2}{2}+k(n-k)+\frac{(n-k)^2}{2}
    =\frac{n^2}{2},
  \end{equation}
  as required. 
  
  Now suppose we have equality in~\eqref{combine}, then all three of the estimates above must be tight. 
  From the third estimate we obtain that $G\setminus C$ is a complete multipartite graph with $m$ equal size classes where $m\le k$.
  Moreover, for each edge $e$ in $E_{G\setminus C}$ we have that $m=c_{G\setminus C}(e) = c_G(e)$. 
  By the second estimate each vertex from $G\setminus C$ must have exactly $k-1$ neighbors in $C$. 
  Consequently if we take a vertex from each class in $G\setminus C$, we obtain that these vertices have a common neighborhood in $C$ of size at least $k-m$. 
   % Thus every edge in $E_{G\setminus C}$ is part of a $k$-clique and so $m=k$.
  Thus every edge in $E_{G\setminus C}$ is part of a $k$-clique in $G$ and so $m=c_{G}(e)=k$.
  Finally, any selection of $k$ vertices from distinct classes of $G\setminus C$ must have an empty common neighborhood in $C$ for otherwise we would have a larger clique. 
  It follows that for every class in $G\setminus C$ there is exactly one vertex in $C$ to which all vertices in that class are nonadjacent, and this vertex is distinct for each class in $G\setminus C$. 
  Adding each such vertex from $C$ to its respective class in $G\setminus C$, we see that $G$ is a balanced multipartite graph as required.
\begin{comment}

\begin{corollary}
$\sum_{v \in V} d(v) \frac{c(v)}{c(v)-1} \le n^2$
\end{corollary}
\begin{proof}
\begin{align*}
\sum_{v \in V} d(v) \frac{c(v)}{c(v)-1} &= \sum_{v \in V} \sum_{e \in E: v \in e}  \frac{c(v)}{c(v)-1} \\
&= \sum_{e \in E} \sum_{v \in e} \frac{c(v)}{c(v)-1} \\
&= \sum_{uv=e \in E} \frac{c(u)}{c(u)-1} + \frac{c(v)}{c(v)-1} \\
&\le 2 \sum_{e \in E} \frac{c(e)}{c(e)-1} \\
&\le n^2.
\end{align*}
The first inequality follows from the fact that $v \in e$ implies $c(v) \ge c(e)$ and $x/(x-1)$ is a decreasing function for $x \ge 2$.  The second inequality follows from Theorem \ref{thm:main}.
\end{proof}

\end{comment}
\end{proof}

Now we give the proof of the localized version of the Erd\H{o}s-Gallai theorem.

\begin{proof}[Proof of Theorem~\ref{localizedErdosGallai}]
  We prove the claim by induction on $n$.  The claim follows trivially
  for $n=1$, so assume $n>1$.  Furthermore, assume that $G$ is
  connected; if $G$ is not connected, we may simply apply the
  induction hypothesis to each connected component in turn, and sum
  the results to arrive at our claim.  

  Assume the longest path in $G$ has length $k$, and let $P$ be a path
  achieving this length, with endpoints $v$ and $w$.  
  We use $R(v)$
  to denote the edges that have $v$ as an endpoint; note that $R(v)$
  cannot contain any edges to vertices outside of $P$, since otherwise
  $P$ is not maximal.  We define $R(w)$ similarly, and note it has
  the same property.

  If there exists a circuit $C$ that includes all the vertices on $P$,
  then we can see that $P$ must include all the vertices in $G$.
  Otherwise, since $G$ is connected, we can find a path from $C$ to
  any vertex $u$ not lying on it; but the  path that starts at $u$,
  goes to $C$, and then goes around $C$ is strictly longer that $P$, a
  contradiction.  So in this case, we have that $k=n-1$.  Furthermore,
  every edge in the graph either lies on the circuit $C$ or forms a chord in $C$. In either case, we can easily construct a path of length
  $n-1$ containing it.  So we can see that
  \begin{equation}\label{all}
    \sum_{e \in E(G)} \frac{1}{p(e)}
    = \abs{E(G)}\frac{1}{n-1}
    \le \binom{n}{2}\frac{1}{n-1}
    = \frac{n}{2},
  \end{equation}
  and we are done. Equality in the theorem implies~\eqref{all} is tight and so $G$ is a clique. 

  Now we assume that there is no circuit $C$ that includes all of the
  vertices of $P$.  
  Note that this means that $\{v,w\}\notin E(G)$.
  Furthermore, let $\{v',w'\}$ be any internal edge of $P$, where $v'$
  is closer to $v$ along $P$ (and so $w'$ is closer to $w$ along $P$).
  Note that $P\setminus\{\{v',w'\}\}\cup\{\{v,w'\},\{v',w\}\}$ would be a
  circuit, so either $\{v,w'\}\notin E(G)$ or $\{v',w\}\notin E(G)$.  Recalling
  that $R(v)$ and $R(w)$ contain no edges to vertices outside
  $P$, we may thus conclude that
  \begin{equation*}
    \abs{R(v)}+\abs{R(w)}\le 2 + (k-2) = k.
  \end{equation*}
  So without loss of generality, we assume that $\abs{R(v)}\le
  k/2$.  Note that any $e\in R(v)$ has $p(e)=k$, since there is
  always some $e'\in P$ that can be replaced with $e$ to form a new
  path $P'$.  Thus, if we apply our induction hypothesis to
  $G\setminus v$, we get that
  \begin{equation}\label{longinequality}
    \sum_{e \in E(G)}\frac{1}{p(e)}
    =\sum_{e \in R(v)}\frac{1}{p(e)}+\sum_{E(G)\setminus R(v)}\frac{1}{p(e)}
    \le \sum_{e \in R(v)}\frac{1}{p(e)}+\sum_{E(G)\setminus R(v)}\frac{1}{p_{G\setminus v}(e)}
    \le \frac{k}{2}\frac{1}{k}+\frac{n-1}{2}
    =\frac{n}{2}.
  \end{equation}
  
  Equality in the theorem statement implies that both inequalities in~\eqref{longinequality} hold with equality for all $e \in E(G)$. This means that for the edges $e$ in $E(G\setminus v)$ we have $p(e) = p_{G\setminus v}(e)$. Moreover by induction $G\setminus v$ must be a disjoint union of cliques.  If $k=1$ the graph consists of just an edge and we are done, so assume $k>1$.  Consider the edge $e$ incident to $v$ in $P$ and the next edge $f$ in the path.  Since $f$ belongs to a connected component which is a clique in $G\setminus v$ and $f$ is adjacent to $e$, we can easily find a longer path in $G$ containing $f$ and so $p(e)>p_{G\setminus v}(e)$. Thus in the present case, equality cannot hold in~\eqref{longinequality}. 
 \end{proof}
We conclude this section with the proof of the analogous statement about stars.  
\begin{proof}[Proof of Proposition~\ref{stars}]
For a vertex $v$ of $G$ set $w(v)$ equal to the sum of $\frac{1}{s(e)}$ across edges $e$ which are incident to $v$.  Then
for all $v$,
\begin{equation} \label{stareq}
w(v) \le \sum_{e:v\in e} \frac{1}{d(v)} = 1.
\end{equation}

Then we have
\[
2 \sum_{e\in E(G)} \frac{1}{s(e)} = \sum_v w(v) \le n,
\]
and the result follows.

Suppose we have equality in~\eqref{stareq}, then for every $v$ every edge $e$ incident to $v$ must satisfy $s(e)=d(v)$. It is then immediate that each component is regular, as claimed.
\end{proof}
Given that we have localized versions of the extremal results of paths and stars it would be natural to investigate what happens in the case of an arbitrary sequence of trees: $T_1 \subset T_2 \subset\cdots$.  However, such an investigation may be difficult since we only know the validity of the Erd\H{o}s-S\'os conjecture in certain cases.  Nonetheless one could hope to prove localized versions conditional on the Erd\H{o}s-S\'os conjecture.

\section{Proofs of localized versions of extremal hypergraph theorems} \label{hypergraphs}

We begin by proving the localized version of the LYM-inequality.

\begin{proof}[Proof of Theorem~\ref{localizedLYM}]
Let $\A \subset 2^{[n]}$ be any family of sets.  Call a chain of sets $\C$ maximal if it contains a set of every possible cardinality.  We will double count pairs $(A,\C)$ where $A \in \A$ and $\C$ is a maximal chain in $2^{[n]}$ using a weight function $w$.  We define $w$ by
\begin{displaymath}
w(A,\C) = 
\begin{cases}
\frac{1}{c(A)} & \mbox{if } A\in \A  \mbox{ and } A \in \C, \\
0 & \mbox{otherwise.}
\end{cases}
\end{displaymath}
First fix a set $A \in \A$.  There are $\abs{A}!(n-\abs{A})!$ maximal chains containing $A$ and so
\begin{equation}
\label{eq1}
\sum_{A\in \A} \sum_{\C} w(A,\C) = \sum_{A\in \A}\frac{\abs{A}!(n-\abs{A})!}{c(A)}.
\end{equation}

Now, fix a maximal chain $\C$.  Suppose $\C$ contains $m$ sets from $\A$; then we have $c(A) \ge m$ for all such sets, since these $m$ sets themself form a chain of length $m$.  Thus, we have
\begin{equation}
\label{eq2}
\sum_{\C} \sum_{A\in \A} w(A,\C) \le \sum_{\C} \frac{m}{m} = n!.
\end{equation}
Comparing $\eqref{eq1}$ and $\eqref{eq2}$ yields
\begin{equation}
\label{eq3}
\sum_{A\in \A} \frac{1}{c(A) \binom{n}{\abs{A}}} \le 1,
\end{equation}
as desired.   Suppose we have equality in $\eqref{eq3}$.  Then the total weight on every chain must be $1$.   
% We claim that in this case $\A$ must equal a union of levels.  Suppose not.
Note that this trivially holds in the case where $\A$ equals a union of levels. We claim that this is the only case where equality holds. Suppose $\A$ is not a union of levels.
Then there is at least one level of $2^{[n]}$ containing both a set from $\A$ and one not from $\A$.  Moreover,  there must be such a pair with symmetric difference $2$.  Call these sets $A$ and $B$ where $A \in \A$ with $c(A)=m$ and $B \notin \A$.  Since $\abs{A}=\abs{B}=t$ and $A$ and $B$ have symmetric difference $2$, it follows that the intersection, $A\cap B$, is of size $t-1$ and the union, $A\cup B$, is of size $t+1$.

Consider any chain of the form  $\C_1 = \{\varnothing,A_1,A_2,\dots,A_{t-2},A\cap B, A,A\cup B, A_{t+2},\dots,[n]\}$, then the total weight along $\C_1$ must be $1$ in the equality case and so $\C_1$ contains $m$ sets from $\A$ each with weight $1/m$.    Now, consider the chain $\C_2 = \{\varnothing,A_1,A_2,\dots,A_{t-2},A\cap B, B,A\cup B, A_{t+2},\dots,[n]\}$. Since this chain contains the same sets as $\C_1$ except with $B \notin \A$ instead of $A \in \A$, the total weight of $\C_2$ is $1-1/m$, a contradiction.  
\end{proof}
\
\begin{proof}
We will use Katona's method of cyclic permutations~\cite{katona1972simple}.  By a cyclic permutation, $\sigma$, we mean a cyclic ordering $a_1 < a_2 < \dots < a_n < a_1$ of $[n]$.  A set $A \in \A$ is an interval in $\sigma$ if its elements are consecutive in $\sigma$.  An interval starts at $a_i$ if $a_i$ is the smallest element in the order on $A$ induced by $\sigma$.
We will double count pairs $(A,\sigma)$ where $A\in\A$ and $\sigma$ is a cyclic permutation of $[n]$ with the following weight function: 
\begin{displaymath}
w(A,\sigma)=
\begin{cases}
\frac{1}{m(A)}, & \text{if } A\in\A \text{ and } A \text{ is an interval in } \sigma,  \\
0, &\text{otherwise.}
\end{cases}
\end{displaymath}
Every set $A\in\A$ is an interval in exactly $r!(n-r)!$ cyclic permutations.  Thus, we have
\begin{displaymath}
\sum_{A\in\A}\sum_{\sigma} w(A,\sigma) = \sum_{A\in\A} \frac{r!(n-r)!}{m(A)}.
\end{displaymath}
Now we switch the order of summation and fix a cyclic permutation $\sigma$.   In Lemma~\ref{lemma:cyclemma}, whose proof is given below, we will show that the the total weight contributed by intervals along any cyclic permutation is at most $r$.  It follows that
\begin{displaymath}
\sum_{\sigma}\sum_{A\in\A}  w(A,\sigma) \le \sum_{\sigma}r = (n-1)!r.
\end{displaymath}
Dividing through by $(n-r)!r!$ gives the desired result.
\end{proof}

\begin{lemma} 
\label{lemma:cyclemma}
Let $\sigma$ be a cyclic permutation and denote by $\A^\sigma$ the collection of those sets in $\A$ which are intervals in $\sigma$.  Let $m(A)$ be defined as in the theorem, then
\begin{displaymath}
\sum_{A\in \A^\sigma} \frac{1}{m(A)} \le r.
\end{displaymath}
\end{lemma}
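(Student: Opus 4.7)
The plan is to translate the problem onto the cyclic structure of~$\sigma$ and then combine a colouring argument with shift-averaging.

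First I will argue a reduction that lets us ignore non-interval sets of~$\A$. Define $m^\sigma(A)$ by the same recipe as $m(A)$ but with matchings drawn from $\A^\sigma$ instead of all of~$\A$. Any matching in $\A^\sigma$ is a matching in~$\A$, so $m(A)\ge m^\sigma(A)$ for every $A\in\A^\sigma$, and it suffices to prove $\sum_{A\in\A^\sigma} 1/m^\sigma(A)\le r$. The problem then becomes purely combinatorial on the cycle: $\A^\sigma$ corresponds to a subset $S\subseteq\mathbb{Z}/n\mathbb{Z}$ of starting positions of length-$r$ arcs, two arcs are disjoint iff their starts are at cyclic distance at least~$r$, and writing $q=\lfloor n/r\rfloor$ and $f(i)$ for the largest pairwise $r$-separated subfamily of $S$ containing~$i$, we have $m^\sigma(i)=n/r$ when $f(i)=q$ and $m^\sigma(i)=f(i)$ otherwise.

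I plan to split on whether the maximum matching $\nu(S)$ attains~$q$. If $\nu(S)<q$, the cap is never triggered and it suffices to $r$-colour the cyclic overlap graph on~$S$: for $r\mid n$ the residue colouring $i\mapsto i\bmod r$ works immediately, and for $r\nmid n$ the strict inequality $\nu(S)<q$ forces at least one canonical shifted matching $M_j=\{j,\,j+r,\,\ldots,\,j+(q-1)r\}$ (indices mod~$n$) to miss a position of~$S$, letting me cut the cycle at such a missing position and colour the resulting path modulo~$r$, choosing the cut carefully to avoid wrap-around conflicts. A resulting partition $S=N_1\sqcup\cdots\sqcup N_r$ into matchings gives $f(i)\ge |N_j|$ for each $i\in N_j$, and summing yields $\sum_{i\in S} 1/f(i)\le\sum_j |N_j|/|N_j|\le r$.

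When $\nu(S)=q$ the cap is essential, and I will use a double-count over the $n$ canonical shifted matchings~$M_j$. Each such $M_j$ is a matching of size~$q$ and each $i\in S$ belongs to exactly~$q$ of them, so $q\sum_{i\in S} 1/m^\sigma(i) = \sum_j \sum_{i\in M_j\cap S} 1/m^\sigma(i)$, and the inner sum is at most~$1$ for every~$j$: when $M_j\subseteq S$ the cap forces each term to equal $r/n$ for a total of $qr/n\le 1$, and otherwise each term is at most $1/|M_j\cap S|$ since $f(i)\ge |M_j\cap S|$ for every $i\in M_j\cap S$. The main obstacle is tightening this when $r\nmid n$: the naive shift-average returns only $n/q > r$, and one must squeeze out the strict per-full-shift savings of $s/n$ (where $s=n-qr$), combine them with the colouring argument applied to the uncapped elements $\{i\in S:f(i)<q\}$, and account for non-canonical maximum matchings whose gaps distribute the slack~$s$ differently from the~$M_j$ themselves.
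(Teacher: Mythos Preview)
Your reduction to $m^\sigma$ is sound and agrees with what the paper does implicitly. After that, however, both branches of your case split are incomplete.

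In the case $\nu(S)<q$, the step ``cut the cycle at a missing position and colour the resulting path modulo~$r$, choosing the cut carefully to avoid wrap-around conflicts'' is not a proof. A single missing starting position $p\notin S$ does not make the overlap graph an interval graph: arcs starting in $\{p-r+1,\dots,p-1\}$ still cross~$p$. Concretely, the wrap-around conflict in your mod-$r$ path colouring occurs precisely between linearised positions $a$ and $a+qr$ for $0\le a<s$ (where $n=qr+s$), so avoiding it requires $s$ consecutive positions outside~$S$ on one side of the cut. You have not argued that such a run exists whenever $\nu(S)<q$, and ``choosing the cut carefully'' does not supply that argument.

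In the case $\nu(S)=q$ with $r\nmid n$ you yourself flag the gap: the shift-average yields only $\sum 1/m^\sigma\le n/q>r$, and your proposed fix (``squeeze out $s/n$ per full shift, combine with the colouring on uncapped elements, account for non-canonical maximum matchings'') is a wish list rather than an argument. In fact the per-shift savings alone close the gap only when every $M_j$ is full, i.e.\ $S=\mathbb{Z}/n\mathbb{Z}$, so real new ideas are needed here.

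The paper bypasses both difficulties with one device. Writing $|\A^\sigma|=lr+k$, it \emph{contracts} the cycle to the $|\A^\sigma|$ starting positions and observes that stepping by~$r$ in the contracted order yields an $l$-matching through any given arc. If $l=t:=\lfloor n/r\rfloor$ the cap applies to every arc and the bound is immediate. If $l<t$, it pads the contracted cycle with $r-k$ ``fake'' starting positions (available since $n-|\A^\sigma|\ge r-k$) to reach size $(l{+}1)r$, then takes the $r$ residue classes in this contracted cycle. Each class is a genuine matching of size $l{+}1$; at least $k$ classes contain no fakes, contributing total weight at most~$k$, while the remaining $l(r-k)$ real arcs each lie in an $l$-matching and contribute at most $r-k$. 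This handles $r\mid n$ and $r\nmid n$ uniformly, with no cutting and no separate treatment of capped versus uncapped elements.
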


\begin{proof} Let $n=tr+s$ where $t$ and $s$ are integers and $0 \le s < r$.  Similarly, let $\abs{A^\sigma} = l r+k$ where $l$ and $k$ are integers and $0 \le k < r$.

First, we show that every set in $\A^\sigma$ is contained in an $l$-matching.  Notice that if an interval $A$ does not begin less than $r$ positions before or after an interval $B$, then $A$ and $B$ are disjoint.  Consider the cyclic order attained by removing all of those elements from $\sigma$ at which no $A \in \A^\sigma$ begins.  If two elements $a$ and $b$ are more than $r$ positions apart in this contracted order, then they must have been at least $r$ positions apart in the original order, and so the interval beginning at $a$ in $\sigma$ must be disjoint from the interval beginning at $b$.  Now take an arbitrary element of the contracted order and move along the order $r$ elements at a time.  After $l-1$ repetitions we are still a distance of at least $r$ from the starting element.  The starting element, along with the elements encountered at each of the $l-1$ steps constitute an $l$-matching.  Since our choice of starting element in the contracted order was arbitrary, it follows that every $A\in \A^\sigma$ is contained in an $l$-matching.

Now, we distinguish two cases based on whether $l=t$ or $l<t$.  If $l=t$, then since every $A\in\A^\sigma$ is contained in an $l$-matching we have,
\begin{displaymath}
\sum_{A\in\A^\sigma}\frac{1}{m(A)} = \frac{\abs{\A^\sigma}}{n/r} \le \frac{n}{n/r}=r.
\end{displaymath}

It remains to show the bound for the case $l<t$.  Since $l<t$ there are at least $r-k$ elements of $\sigma$ at which no set in $\A^{\sigma}$ begins. Choose any $r-k$ such elements.  

We will refer to these elements as ``fake'' starting positions as we will imagine additional intervals beginning at them.  The size of $\A^{\sigma}$ plus the number of fake starting positions is then $(l+1)r$. As before, we consider the contracted order containing only elements beginning sets in $\A^{\sigma}$ and the fake starting positions.  Since $(l+1)r$ is a multiple of $r$, we may partition the elements of the resulting order into $r$  classes each containing $l+1$ elements such that in any class the distance between two elements is a multiple of $r$. There are only $r-k$ fake starting positions, so it follows that some $k$ of these classes consist of only elements corresponding to sets in $\A^{\sigma}$. For each set $A$ contained in one of the $k$ classes consisting only of sets in $\A^{\sigma}$, we know that $m(A) \ge l+1$ since $A$ is in a $l+1$-matching with the other sets in the class.  It follows that the net weight contributed by such sets is at most 
\begin{equation}
\label{eqn:partone}
\frac{k(l+1)}{l+1}=k.
\end{equation}
Subtracting off the sets in $\A^{\sigma}$ which fill one of the classes and the fake starting positions leaves
\begin{displaymath}
r(l+1)-k(l+1)-(r-k)=l(r-k)
\end{displaymath}
sets.  Since every set in $\A^{\sigma}$ is contained in matching of size at least $l$, we have that the weight contribution of the remaining sets is at most
\begin{equation}
\label{eqn:parttwo}
\frac{l(r-k)}{l}=r-k.
\end{equation}
Thus, adding \eqref{eqn:partone} and \eqref{eqn:parttwo} we have that
\begin{displaymath}
\sum_{A\in\A^{\sigma}} \frac{1}{m(A)}\le k+(r-k)=r,
\end{displaymath}
as desired.
\end{proof}

\section{Results on perfect graphs, posets and sequences}\label{posets}

\begin{theorem} \label{superlemma}
Let $G$ be a perfect graph, and $f$ be a function on the set of pairs $(H, v)$ where $H$ is an induced subgraph of $G$ and $v\in V(H)$. Assume that $f$ satisfies $f(H,v) \le f(G[V(H)\cup\{w\}],v)$ for inducerd subgraphs $H$ of $G$ and $w\in V(G) \setminus V(H)$. Also assume that if $H$ is an independent set in $G$, then we have
\begin{equation*}
    \sum_{v\in V(H)}\frac{1}{f(H,v)} \le 1.
\end{equation*}
For an induced subgraph $H$ of $G$, let $c(H, v)$ be the size of the largest clique in $H$ that contains $v$. Then, we have
\begin{equation*}
    \sum_{v \in V(H)}\frac{1}{f(H, v)c(H, v)}\le 1.
\end{equation*}
\end{theorem}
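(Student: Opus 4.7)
The plan is to reduce the statement to the perfect-graph polytope picture, applied to the vertex weight $w(v) := 1/f(H, v)$. First I would verify the hypothesis needed for the polytope: $\sum_{v \in I} w(v) \le 1$ for every independent set $I \subseteq V(H)$. Indeed, starting from the induced subgraph $G[I]$, which is itself an independent set of $G$, the theorem's assumption gives $\sum_{v \in I} 1/f(G[I], v) \le 1$. Repeatedly applying the monotonicity of $f$ while growing $G[I]$ back to $H$ by adding the vertices of $V(H)\setminus I$ one at a time shows $f(H, v) \ge f(G[I], v)$ for each $v \in I$, and hence $\sum_{v \in I} w(v) \le 1$.

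Second, since $H$ is an induced subgraph of the perfect graph $G$, $H$ is itself perfect. I would then invoke Chv\'atal's characterization of the stable set polytope of a perfect graph (equivalently, LP duality combined with Lov\'asz's Perfect Graph Theorem) applied to the vertex weighting $w$ on $H$. This yields a fractional clique cover of $w$ of total weight at most $1$: there exist nonnegative reals $y_C$, indexed by the cliques $C$ of $H$, with $\sum_{C} y_C \le 1$ and $\sum_{C \ni v} y_C \ge w(v)$ for every $v \in V(H)$.

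Given this cover, the target bound follows from a short calculation using the observation that $c(H, v) \ge |C|$ whenever $v \in C$:
\begin{align*}
\sum_{v \in V(H)} \frac{1}{f(H, v)\,c(H, v)} &= \sum_v \frac{w(v)}{c(H, v)} \le \sum_v \frac{1}{c(H, v)} \sum_{C \ni v} y_C \\
&= \sum_C y_C \sum_{v \in C} \frac{1}{c(H, v)} \le \sum_C y_C \le 1.
\end{align*}
The main obstacle is the second step: the appeal to the perfect-graph polytope characterization is the only nontrivial ingredient, and I do not see an elementary substitute. I briefly considered a purely inductive approach (induction on $\omega(H)$ via removing a color class from an optimal coloring of $H$ and using monotonicity of $f$), but summing the resulting inequalities over the $\omega(H)$ color classes loses a factor of $\omega(H)/(\omega(H)-1)$ that I do not see how to recover without the polytope machinery.
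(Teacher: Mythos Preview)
Your argument is correct. The reduction to the independent-set constraint on $w(v)=1/f(H,v)$ via the monotonicity of $f$ is exactly right, and once you know $\sum_{v\in I}w(v)\le 1$ for every stable set $I$ of the perfect graph $H$, Chv\'atal's description of the stable set polytope (equivalently LP duality on QSTAB$(H)$) does give a fractional clique cover of total weight at most $1$, after which your two-line computation finishes the proof.

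The paper takes precisely the elementary inductive route you abandoned, but with one extra idea that repairs the $\omega/(\omega-1)$ loss you observed. Rather than deleting an entire color class of an optimal coloring, the paper lets $B=\{v:c(H,v)=k\}$ and removes only $B_r=B\cap(\text{color class }r)$. This still drops the clique number to $k-1$, but now every vertex outside $B$ survives in all $k$ subgraphs, while every vertex of $B$ survives in exactly $k-1$ of them and, crucially, has its $c$-value drop from $k$ to exactly $k-1$ there. Summing the $k$ inductive inequalities therefore produces $k$ copies of the target sum on the nose, with no loss.

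So the two proofs trade off depth of tools against length: yours is a few lines once one is willing to quote the perfect-graph polytope theorem, whereas the paper's argument is longer but uses nothing beyond $\chi(H)=\omega(H)$ for perfect $H$. Your approach has the side benefit of making transparent why the statement is really a fact about the anti-blocker relationship between stable sets and cliques in perfect graphs.
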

\begin{proof}
We use induction on the size of the largest clique in $H$. If $H$ is an independent set, then we have $c(H, v)=1$ for all $v\in V(H)$ and the desired bound follows by assumption.
Otherwise, let $k$ be the size of the largest clique in $H$, and assume we have the desired bound whenever the maximum clique has size strictly less than $k$.
Define $B$ as the set of all vertices contained in a $k$-clique, that is $B=\{v\in V(H): c(H, v) = k\}$.
Since $G$ is perfect, we may conclude that the chromatic number of $H$ is also $k$. Let $R$ be any $k$-coloring of $H$. We form the natural partition of $B$ based on $R$ as
\begin{equation*}
    B_r=\{v \in B: v\text{ has color $r$ under }R\}.
\end{equation*}
Now, for any color $r\in R$, we know that every $k$-clique in $H$ must contain exactly one vertex with color $r$, and so the largest clique in $H \setminus B_r$ must have size exactly $k-1$. We apply induction to get that
\begin{equation}\label{eq:pg_sumup}
    \sum_{v\in V(H\setminus B_r)}\frac{1}{f(H\setminus B_r, v)c(H \setminus B_r, v)} \le 1
\end{equation}
for every $r\in R$.

Now, for all $v \in H$ and all $r \in R$, we have by assumption that $f(H, v) \ge f(H \setminus B_r, v)$ and trivially that $c(H, v) \ge c(H\setminus B_r,v)$. Recall that $B_r$ contains exactly one vertex from each $k$-clique with vertices in $B$ (which is every $k$-clique in $H$). Thus, for any $v \in B \setminus B_r$ it is clear that $c(H\setminus B_r,v)=k-1$, and so for all $v \in B$ we have $c(H \setminus B_r, v) = \frac{k-1}{k}c(H, v)$. Summing Equation~\eqref{eq:pg_sumup} over all $r$ gives us
\begin{align*}
    k
    &\ge \sum_{r \in R}\sum_{v\in V(H\setminus B_r)}\frac{1}{f(H\setminus B_r, v)c(H \setminus B_r, v)}\\
    &=\sum_{r\in R}\sum_{v\in V(H\setminus B)}\frac{1}{f(H\setminus B_r, v)c(H \setminus B_r, v)}+\sum_{r\in R}\sum_{v\in V(G[B\setminus B_r])}\frac{1}{f(H\setminus B_r, v)c(H \setminus B_r, v)}\\
    &\ge k \sum_{v\in V(H\setminus B)}\frac{1}{f(H, v)c(H, v)} + (k-1)\sum_{v\in V(G[B])}\frac{1}{f(H, v)\frac{k-1}{k}c(H, v)}\\
    &= k \sum_{v\in V(H)}\frac{1}{f(H, v)c(H, v)}.
\end{align*}
Dividing both sides by $k$ yields the desired inequality.
\end{proof}

The following corollary is a natural generalization of Theorem~\ref{localizedLYM} to any poset satisfying the LYM-inequality.
\begin{corollary} \label{posetcA}
Let $P$ be a ranked poset with $N_i$ elements of rank $i$. For $x\in P$, let $\abs{x}$ denote the rank of $x$.   Suppose that any antichain  $S \subset P$ satisfies the LYM-type inequality:
\begin{displaymath}
\sum_{x\in S} \frac{1}{N_{\abs{x}}}\le 1.
\end{displaymath}
Then, given any set $T \subset P$ we have
\begin{displaymath}
\sum_{x \in T} \frac{1}{N_{\abs{x}} c(T,x)} \le 1.
\end{displaymath}
\end{corollary}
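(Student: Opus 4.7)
The plan is to deduce Corollary~\ref{posetcA} as a direct application of Theorem~\ref{superlemma} to the comparability graph of the poset $P$. Recall that the comparability graph of $P$ has vertex set $P$ and an edge between distinct elements $x,y$ whenever $x<y$ or $y<x$ in $P$. By a classical result (due to Berge, building on Dilworth and Mirsky), comparability graphs of posets are perfect. Under this correspondence, cliques in the comparability graph are precisely chains in $P$, and independent sets are precisely antichains.

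With this setup in hand, the strategy is as follows. Let $G$ be the comparability graph of $P$, and take $H = G[T]$, the induced subgraph on $T$. For an arbitrary induced subgraph $H'$ of $G$ and vertex $v \in V(H')$, define
\[
f(H',v) = N_{|v|},
\]
a quantity depending only on $v$ through its rank. The monotonicity hypothesis $f(H',v) \le f(G[V(H') \cup \{w\}],v)$ of Theorem~\ref{superlemma} then holds trivially (with equality), since $f$ does not depend on $H'$. The independent-set hypothesis is exactly the assumed LYM-type inequality: if $H'$ is an independent set in $G$, then $V(H')$ is an antichain in $P$, and so
\[
\sum_{v\in V(H')} \frac{1}{f(H',v)} = \sum_{v\in V(H')} \frac{1}{N_{|v|}} \le 1.
\]

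Applying Theorem~\ref{superlemma} to $G$, $H$, and this choice of $f$, we obtain
\[
\sum_{v \in T} \frac{1}{f(H,v)\, c(H,v)} \le 1,
\]
where $c(H,v)$ is the size of the largest clique in $H$ containing $v$. Under the comparability correspondence, a clique in $H$ through $v$ is exactly a chain in $T$ through $v$, so $c(H,v) = c(T,v)$. Substituting this and $f(H,v) = N_{|v|}$ yields the desired bound.

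The proof is essentially just a setup; there is no genuine obstacle once Theorem~\ref{superlemma} is available. The one conceptual step is realizing that the function $f$ in Theorem~\ref{superlemma} may be taken to depend only on the vertex (through its rank), so that the monotonicity hypothesis becomes vacuous and the independent-set hypothesis collapses to the LYM-type assumption we are given. The only external ingredient is the perfection of comparability graphs, which is classical and requires no further work here.
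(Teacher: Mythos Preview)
Your proposal is correct and follows essentially the same approach as the paper: pass to the comparability graph of $P$, take $f(H',v)=N_{\abs{v}}$ (constant in $H'$), verify the hypotheses of Theorem~\ref{superlemma}, and read off the conclusion using the chain/clique correspondence. Your write-up is slightly more explicit in noting the perfection of comparability graphs and the vacuity of the monotonicity condition, but the argument is identical in substance.
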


\begin{proof}
Let $G$ be the comparability graph of $P$. A set $S \subset P$ corresponds to an induced subgraph~$G_S$ of~$G$. Observe that $S$ is an antichain if and only if $G_S$ is an independent set. Moreover the chains in the poset $P$ are in one-to-one correspondence with the cliques in $G$.  The assumption that an antichain $S$ in $P$ satisfies the LYM-inequality shows that the conditions of Theorem~\ref{superlemma} are satisfied when $f(G_S,x)$ is taken to be $N_{\abs{x}}$ for all $S$ and $x$. Then the conclusion of Corollary~\ref{posetcA} is immediate from the conclusion of Theorem~\ref{superlemma}.
\end{proof}

Now we deduce Theorem~\ref{localizedperfect} from Theorem~\ref{superlemma}.

\begin{proof}[Proof of Theorem~\ref{localizedperfect}]
Let $G$ be a perfect graph and for an induced subgraph $H$ and $x\in V(H)$ let $f(H,v)$ be the maximum size of an independent set in $H$ which contains $v$.  It is easy to see $f$ satisfies the condition increasing condition. If $H$ is an independent set then $f(H,v)=\abs{V(H)}$ and the other condition of   Theorem~\ref{superlemma} is satisfied.  Then taking $G=H$, the conclusion of Theorem~\ref{superlemma} gives the required bound:
\[
\sum_{v\in V(G)} \frac{1}{i(v)c(v)} = \sum_{v\in V(G)} \frac{1}{f(G,v)c(G,v)} \le 1.\qedhere
\]
\end{proof}
% Fix a poset $P$.  For all $S \subset P$ and $x \in S$ let $f(x,S) = A(x)$ (where $A(x)$ and $C(x)$ are defined in terms of $P$).  If $S$ is an antichain, then $f(x,S) = \abs{S}$ and the required condition of Theorem \ref{poset} hold.  Moreover it is clear that for $u \in P \setminus S$ that adding $u$ to $S$ can only increase the size of an antichain to which $x$ belongs.  Thus, we may apply Theorem %\ref{poset} and obtain that for any $S \subset P$
% \begin{displaymath}
% \sum_{x\in S} \frac{1}{A(x)C(x)} \le 1.
% \end{displaymath}
% Taking $S = P$ we obtain the form of Corollary \ref{dilworth}.

Again by considering the comparability graph the following corollary is immediate from Theorem~\ref{localizedperfect}. For any poset $P$ and $x \in P$ let $C(x)$ denote the size of the  largest chain $x$ belongs to and $A(x)$ the size of the largest antichain $x$ belongs to.
\begin{corollary}
\label{dilworth}
Let $P$ be any poset we have,
\begin{displaymath}
\sum_{x\in P} \frac{1}{A(x)C(x)}\le 1.
\end{displaymath}
\end{corollary}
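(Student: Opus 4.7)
The plan is to deduce this from Theorem~\ref{localizedperfect} by passing to the comparability graph of $P$, as the paragraph preceding the corollary suggests. Let $G$ be the graph on vertex set $P$ in which two distinct elements $x,y$ are joined by an edge precisely when they are comparable in $P$. Comparability graphs are a classical example of perfect graphs (this is a theorem of Berge and can be proved directly by noting that the same property is preserved under taking induced subgraphs, together with the fact that every comparability graph admits a clique-cover by $\chi$ antichains via Mirsky's theorem). Thus $G$ satisfies the hypothesis of Theorem~\ref{localizedperfect}.

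Next I would identify the relevant parameters under this correspondence. The cliques of $G$ are exactly the chains of $P$, and the independent sets of $G$ are exactly the antichains of $P$. Consequently, for every $x \in P$, the maximum size $c(x)$ of a clique in $G$ containing $x$ equals $C(x)$, and the maximum size $i(x)$ of an independent set in $G$ containing $x$ equals $A(x)$. Substituting these identifications into the conclusion of Theorem~\ref{localizedperfect} yields
\[
\sum_{x\in P}\frac{1}{A(x)\,C(x)} \;=\; \sum_{v\in V(G)}\frac{1}{c(v)\,i(v)} \;\le\; 1,
\]
which is precisely the desired bound.

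There is essentially no obstacle: the only non-trivial input is the perfection of comparability graphs, which is standard, and the rest of the argument is a direct translation between the language of posets and the language of graphs. In particular, the hard analytic work has already been carried out in the proof of Theorem~\ref{superlemma}, from which Theorem~\ref{localizedperfect} was deduced.
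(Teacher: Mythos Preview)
Your proof is correct and follows exactly the approach indicated in the paper: apply Theorem~\ref{localizedperfect} to the comparability graph of $P$, using that comparability graphs are perfect and that cliques and independent sets correspond to chains and antichains, respectively. The paper treats this corollary as immediate from Theorem~\ref{localizedperfect} with no further argument, so your write-up is, if anything, more detailed than the original.
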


By considering the natural poset defined on sequences of numbers $x_1,x_2,\dots,x_N$ where $x_i \prec x_j$ if $i<j$ and $x_i<x_j$ we obtain a localized version of the classical result of Erd\H{o}s and Szekeres~\cite{erdos1935combinatorial}. For a finite  sequence of real numbers let $i(x)$ and $d(x)$ denote the longest increasing and decreasing subsequence containing $x$, respectively. Then we have the following.
\begin{corollary}
For any finite sequence $S$ of real numbers
\[
\sum_{x\in S} \frac{1}{i(x)d(x)} \le 1.
\]
\end{corollary}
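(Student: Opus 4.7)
The plan is to apply Corollary~\ref{dilworth} directly to the poset $P$ on the ground set $\{x_1,\ldots,x_N\}$ defined by $x_i\prec x_j$ iff $i<j$ and $x_i<x_j$, which is precisely the poset flagged in the paragraph preceding the corollary statement. The whole argument then reduces to exhibiting a dictionary between chains/antichains in $P$ and subsequences of $S$.

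First I would check that chains in $P$ containing a given element $x$ correspond to strictly increasing subsequences of $S$ containing $x$: the pair of inequalities $i_1<\cdots<i_k$ and $x_{i_1}<\cdots<x_{i_k}$ defining a chain $x_{i_1}\prec\cdots\prec x_{i_k}$ is exactly the definition of a strictly increasing subsequence, giving $C(x)=i(x)$. Next I would handle antichains: if $x_{i_1},\ldots,x_{i_k}$ are the elements of an antichain listed in order of increasing index, then pairwise incomparability together with $i_a<i_b$ rules out $x_{i_a}<x_{i_b}$, forcing $x_{i_a}\ge x_{i_b}$. Hence antichains correspond to weakly decreasing subsequences, and for sequences of distinct reals this coincides with strictly decreasing, so $A(x)=d(x)$. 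With both identifications in place, Corollary~\ref{dilworth} immediately produces
\[
\sum_{x\in S}\frac{1}{i(x)\,d(x)}=\sum_{x\in P}\frac{1}{A(x)\,C(x)}\le 1.
\]

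The only subtlety I anticipate is the interpretation of $d(x)$ when $S$ has repeated entries, since the poset description naturally yields weakly decreasing rather than strictly decreasing subsequences. If $d(x)$ is understood in the strict sense, I would resolve ties by the standard device of replacing each term $x_i$ with the pair $(x_i,-i)$ and using lexicographic order; this preserves every strictly increasing and every strictly decreasing subsequence while eliminating equalities, after which the argument above applies verbatim. Beyond this bookkeeping there is no real obstacle: the corollary is essentially a free consequence of the perfect-graph framework developed in Theorem~\ref{superlemma} and Corollary~\ref{dilworth}, with the Mirsky/Dilworth correspondence between increasing/decreasing subsequences and chain/antichain structure doing all of the combinatorial work.
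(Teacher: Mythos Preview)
Your main argument is correct and is precisely the paper's approach: the corollary is stated there as an immediate consequence of Corollary~\ref{dilworth} applied to the poset $x_i \prec x_j \iff (i<j \text{ and } x_i<x_j)$, with exactly the chain/antichain dictionary you spell out.

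One caveat on your side remark about repeated entries. The perturbation $(x_i,-i)$ does preserve all existing strictly monotone subsequences, but it also converts every constant block into a strictly decreasing run; hence $d'(x)$ for the perturbed sequence can exceed the strict $d(x)$, and the inequality $\sum 1/(i'(x)d'(x))\le 1$ does not imply $\sum 1/(i(x)d(x))\le 1$. In fact the corollary is simply false under the strict reading when repetitions occur: for $S=(1,1)$ each term has $i(x)=d(x)=1$ and the sum equals $2$. The paper does not address this point and implicitly assumes distinct reals (equivalently, reads ``decreasing'' as non-increasing, which is exactly what the antichains of the poset give).
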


\section{Acknowledgements}
 The second author would like to thank Rutger Campbell and Tuan Tran for insightful discussions about this topic and Nika Salia for providing the example with the triangle and bow tie in the introduction.
 The research of the second author was supported by NKFIH grant K135800.
\bibliography{references.bib}

\end{document}